\theoremstyle{plain}
\newtheorem{thm}{Theorem}[section]
\newtheorem{prop}{Proposition}[section]
\newtheorem{remark}{Remark}[section]
\newtheorem{thma}{Theorem}
\theoremstyle{proof}
\numberwithin{equation}{section}
\begin{document} 
\title[Exponents of class groups of imaginary quadratic fields]{On the exponents of class groups of some families of imaginary quadratic fields}
\author{Azizul Hoque}
\address{Department of Mathematics, Rangapara College, Rangapara, Sonitpur-784505, Assam, India}
\email{ahoque.ms@gmail.com}

\subjclass[2010]{11R11; 11R29}

\date{\today}

\keywords{Imaginary quadratic field, Class group, Exponent}

\begin{abstract}
Let $a\geq 1$ and $n>1$ be odd integers. For a given prime $p$, we prove under certain conditions that the class groups of imaginary quadratic fields $\mathbb{Q}(\sqrt{a^2-4p^n})$ have a subgroup isomorphic to $\mathbb{Z}/n\mathbb{Z}$. We also show that this family of fields has infinitely many members with the property that their class groups have a subgroup isomorphic to $\mathbb{Z}/n\mathbb{Z}$. In addition, we deduce some unconditional results concerning the divisibility of the class numbers of certain imaginary quadratic fields. At the end, we provide some numerical examples to verify our results. 
\end{abstract}
\maketitle{}

\section{Introduction}
One of the fundamental problems in the theory of quadratic fields is the following: 

{\it For a given integer $n>1$, find quadratic fields whose class group has a subgroup isomorphic to $\mathbb{Z}/n\mathbb{Z}$.}

The following is a slightly weaker form of this problem:  
{\it Find quadratic fields whose class number is divisible by a given integer $n>1$.}
These problems are useful for understanding the structure of class groups of quadratic fields. 
Nagell \cite{NA22} (resp. Yamamoto \cite{YA70}) proved the existence of infinitely many imaginary (resp. both real and imaginary) quadratic fields whose class number is divisible by a given integer $n\geq 2$.  
Many authors (see, Ankeny and Chowla \cite{AC55}, Chakraborty et al. \cite{CH18}, Soundararajan \cite{SO00} and Kishi \cite{KI09}) gave some families of imaginary quadratic fields of the form $\mathbb{Q}(\sqrt{x^2-y^n})$ whose class number is divisible by $n$. 

In this paper, we focus on the following $3$-parametric family of imaginary quadratic fields:$$K(x,y,n):=\mathbb{Q}(\sqrt{x^2-4y^n}),$$
where $x\geq 1, y\geq 2$ and $n\geq 2$ are integers. 
Gross and Rohrlich were the first to study this family in \cite{GR78} when $x=1$ and $n>1$ is an odd integer, and proved  Theorem \ref{thmGR}. Later, Cohn slightly refined this result in \cite{CO02} only when $y=2$ (see, Theorem \ref{thmC}). In 2009, Louboutin came back in \cite{LO09} to this family and proved that in case of odd $y\geq 3$ with the property that it has at least one prime factor equal to $3$ modulo $4$, the class number of $K(1,y,n)$ is divisible by $n$. For even integer $n\geq 6$, Ishii \cite{IS11} proved that the class number of $K(1,y,n)$ is divisible by $n$, except $(y, n)=(13, 8)$. After that Ito came back to this family in \cite{IT15} and summed up  all these results. She proved that in case of odd $y$, the class number of $K(1, y, n) $ is divisible by $n$, except for some values of $n$ and $y$. She also considered the imaginary quadratic fields $K(3^t, p, n)$ with $p$ prime and $t, n$ positive integers, and investigated the divisibility of the class numbers of these fields by $n$. One of the aims of this paper is to prove a stronger result than existing results concerning the divisibility of the class numbers of imaginary quadratic fields of the form $K(a,p, n)$. Another aim is to expound some of the existing results (see, Theorems \ref{thmGR} and \ref{thmC}) and to deduce some unconditional results (see, Theorems \ref{thm4.1} and \ref{thm4.2}) along the same line.  More precisely, we prove the following results.

\begin{thm}\label{thm1}
Let $a\geq 1$ and $n\geq 3$ be odd integers, and $p$  a prime such that $\gcd(a,p)=1$ and $a^2<4p^n$.
Suppose that $-d$ is the square-free part of $a^2-4p^n$. For $a\ne 1$, assume that one of the following holds:
\begin{itemize}
\item[(i)] $a\not\equiv \pm b\pmod \ell$ for any divisor $b$ of $a$ other than $a$ and for any prime divisor $\ell$ of $n$, and $d\ne 3$,
\item[(ii)] $2^{\ell-1}a\not\equiv b^\ell\pmod d$ for any divisor $b$ of $a$ other than $a$ and for any prime divisor $\ell$ of $n$. 
\end{itemize}
Then except for $(a,p,n)\in\{(5,2,3), (5,2,9), (11,2,5), (13, 2, 7)\}$, the class group of $\mathbb{Q}(\sqrt{-d})$ has a subgroup isomorphic to $\mathbb{Z}/n\mathbb{Z}$.
\end{thm}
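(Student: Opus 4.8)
The plan is to exhibit an ideal class of order exactly $n$ in the class group of $K:=\mathbb{Q}(\sqrt{-d})$. Put $\alpha:=\tfrac12\bigl(a+\sqrt{a^2-4p^n}\bigr)$, a root of $X^2-aX+p^n$, so $\alpha\in\mathcal{O}_K$, $N(\alpha)=p^n$ and $\alpha+\bar\alpha=a$. First I would check that $p$ splits in $\mathcal{O}_K$: from $\gcd(a,p)=1$ we get $p\nmid(a^2-4p^n)$, so $p\nmid d$ and $p$ is unramified, while $\bigl(\tfrac{-d}{p}\bigr)=\bigl(\tfrac{a^2-4p^n}{p}\bigr)=\bigl(\tfrac{a^2}{p}\bigr)=1$, the prime $p=2$ being settled directly from $a^2-2^{n+2}\equiv1\pmod 8$; in passing one notes $a^2-4p^n\equiv1\pmod 4$ with odd square part, hence $d\equiv3\pmod 4$ and in particular $d\ne1,2$. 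Write $(p)=\mathfrak{p}\bar{\mathfrak{p}}$. A prime dividing both $(\alpha)$ and $(\bar\alpha)$ would divide $(\alpha\bar\alpha)=(p^n)$ and $(\alpha+\bar\alpha)=(a)$, forcing $p\mid a$; so $(\alpha)$ and $(\bar\alpha)$ are coprime, and $(\alpha)(\bar\alpha)=\mathfrak{p}^n\bar{\mathfrak{p}}^n$ then gives $(\alpha)=\mathfrak{p}^n$ after possibly interchanging $\mathfrak{p}$ and $\bar{\mathfrak{p}}$. Hence the class $[\mathfrak{p}]$ has order dividing $n$; since an order that divides $n$ but not $n/\ell$ for every prime $\ell\mid n$ must equal $n$, it suffices to prove $\mathfrak{p}^{n/\ell}$ is non-principal for each prime $\ell\mid n$.

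Suppose instead $\mathfrak{p}^{n/\ell}=(\beta)$ for some prime $\ell\mid n$ and some $\beta\in\mathcal{O}_K$. Then $(\beta)^{\ell}=(\alpha)$, and as $d\ne1,3$ (the values $1,2$ being automatically excluded and $d=3$ by the hypotheses) we have $\mathcal{O}_K^{\times}=\{\pm1\}$, so $\beta^{\ell}=\pm\alpha$. Put $u:=\beta+\bar\beta\in\mathbb{Z}$, so that $\beta,\bar\beta$ are the roots of $X^2-uX+p^{n/\ell}$. For odd $\ell$ one has $\beta^{\ell}+\bar\beta^{\ell}=(\beta+\bar\beta)\sum_{i=0}^{\ell-1}(-1)^i\beta^{\ell-1-i}\bar\beta^{i}$, the second factor being a symmetric function of $\beta,\bar\beta$, hence a rational integer; therefore $u\mid\beta^{\ell}+\bar\beta^{\ell}=\pm a$, so $u=\pm b$ for some positive divisor $b$ of $a$. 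Two congruences now pin $b$ down. Reducing the power sum modulo $\ell$ (Frobenius and Fermat) gives $\pm a=\beta^{\ell}+\bar\beta^{\ell}\equiv(\beta+\bar\beta)^{\ell}\equiv u\pmod{\ell}$, so $a\equiv\pm b\pmod{\ell}$; alternatively, writing $\beta=\tfrac12(u+v\sqrt{-d})$ and reducing $2^{\ell}\beta^{\ell}=(u+v\sqrt{-d})^{\ell}=\pm2^{\ell-1}(a+y\sqrt{-d})$, where $a^2-4p^n=-dy^2$, modulo $(\sqrt{-d})$ and then modulo the rational integer $d$ gives $u^{\ell}\equiv\pm2^{\ell-1}a\pmod{d}$, i.e.\ $b^{\ell}\equiv\pm2^{\ell-1}a\pmod{d}$. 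In the first case hypothesis (i), in the second hypothesis (ii), excludes every $b\ne a$; hence $u=\pm a$.

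It remains to handle $u=\pm a$, which after replacing $\beta$ by $-\beta$ if need be (harmless as $\ell$ is odd) we take to be $u=a$; then $\beta=\tfrac12(a+v\sqrt{-d})$ with $v$ odd and, since $a$ is odd, $v\ne0$, and $a^2+dv^2=4p^{n/\ell}<4p^n=a^2+dy^2$ with $y$ odd, so $0<|v|<|y|$. Comparing the $\sqrt{-d}$-parts of $2^{\ell}\beta^{\ell}=\pm2^{\ell-1}(a+y\sqrt{-d})$ shows $v\mid2^{\ell-1}y$, hence $v\mid y$; dividing the rational-part identity by $a$ yields $a^{\ell-1}\equiv\pm2^{\ell-1}\pmod{v^2}$ and so the bound $v^2\le a^{\ell-1}+2^{\ell-1}$ (the right-hand side being nonzero as $a$ is odd), and dividing the $\sqrt{-d}$-part identity by $v$ and cancelling leading terms gives $y/v\equiv\ell\pmod{v^2}$. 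Setting $m:=y/v$ and eliminating $d$ between $a^2+dv^2=4p^{n/\ell}$ and $a^2+dm^2v^2=4p^n$ produces the relation $m^2\bigl(4p^{n/\ell}-a^2\bigr)=4p^n-a^2$; together with $v^2\le a^{\ell-1}+2^{\ell-1}$ and $m\equiv\ell\pmod{v^2}$ this reduces matters to a finite Diophantine search, which leaves precisely the tuples $(5,2,3),(5,2,9),(11,2,5),(13,2,7)$, all with $d=7$, where $\mathfrak{p}$ is actually principal and no contradiction is available. Outside this list one obtains a contradiction, so $[\mathfrak{p}]$ has order $n$ and $\langle[\mathfrak{p}]\rangle\cong\mathbb{Z}/n\mathbb{Z}$ sits inside the class group of $\mathbb{Q}(\sqrt{-d})$.

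All steps up to and including the reduction to $u=\pm a$ are essentially bookkeeping; the real work — and the one place I expect genuine difficulty — is the last step, namely bounding and case-splitting sharply enough that $\mathrm{Tr}(\beta)=\pm a$ collapses to exactly those four triples, rather than to a larger or less tractable finite set (and making sure no stray small-discriminant cases such as $d=3$ slip through).
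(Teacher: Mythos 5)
Your argument tracks the paper's own proof up to and including the elimination of $u=\pm b$ with $b\ne a$: the paper likewise shows $p$ splits, writes $(\alpha)=\mathfrak{a}^n$, uses $\mathcal{O}_K^{\times}=\{\pm1\}$ (after discarding $d=1,3$) to reduce to $\alpha$ being an $\ell$-th power, and in its Proposition 3.1 derives exactly your two congruences, modulo $\ell$ against hypothesis (i) and modulo $d$ against hypothesis (ii). (Both you and the paper are slightly loose with the sign coming from $u=-b$, since (ii) as stated only forbids $2^{\ell-1}a\equiv +b^{\ell}\pmod d$; that is a shared blemish, not your main problem.)

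The genuine gap is in the case $u=\pm a$. At that point one has the two equalities $dv^2+a^2=4p^{n/\ell}$ and $dc^2+a^2=4p^{n}$, i.e.\ two distinct positive solutions $(|v|,n/\ell)$ and $(c,n)$ of the single generalized Ramanujan--Nagell equation $dx^2+a^2=4p^{y}$. The paper disposes of this by its Proposition 2.1, which asserts that this equation has at most one positive solution apart from the four listed quadruples, and that proposition is not elementary: it rests on the Bugeaud--Shorey theorem (linear forms in logarithms / primitive divisors), supplemented by Ljunggren's equation $\frac{z^y-1}{z-1}=x^2$ and Cohn's theorem on square Lucas numbers to kill the exceptional families $\mathcal{F},\mathcal{G}_2,\mathcal{H}_2$, plus the overlooked quadruple $(2,7,25,2)$ of Remark 2.1; this is the sole source of the excluded triples $(5,2,3),(5,2,9),(11,2,5),(13,2,7)$. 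Your proposed substitute --- the bounds $v\mid c$, $v^2\le a^{\ell-1}+2^{\ell-1}$, the congruence $c/v\equiv\ell\pmod{v^2}$, and the relation $m^2(4p^{n/\ell}-a^2)=4p^{n}-a^2$ --- does not reduce the problem to a finite Diophantine search: $a$, $\ell$, $p$ and $n$ all range over infinite sets, and even with $a,\ell$ fixed and $v$ bounded you must still rule out, for each admissible $v$, all solutions in the unbounded unknowns $p$ and $n$ of a pair of exponential equations, which is precisely the ``at most one solution'' statement you have not proved. So the claim that the search ``leaves precisely'' those four tuples is unsubstantiated, and it occurs exactly at the step you yourself flag as the expected difficulty; without an input of Bugeaud--Shorey strength (or an equivalent result on $dx^2+a^2=4p^y$), the proof is incomplete.
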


We note that $K(a, p, n)=\mathbb{Q}(\sqrt{-d})$. 
 We compute the class numbers of some fields in this family to verify Theorem \ref{thm1} and put them in Table \ref{T1}. We see in Table \ref{T1} that the assumptions (i) and (ii) in Theorem \ref{thm1} hold very often. In addition, for a given odd integer $a\geq 1$, the condition (ii) holds almost always, which can be proved using the remarkable theorem of Siegel theorem on integral points on affine curves. More precisely, we prove the following result to show the infinitude of the imaginary quadratic fields in this family whose class group has a subgroup isomorphic to $\mathbb{Z}/n\mathbb{Z}$. 
\begin{thm}\label{thm2}
Let $a\geq 1$ and $n>1 $ be odd integers. Then the class group of $K(a,p,n)$ has a subgroup isomorphic to $\mathbb{Z}/n\mathbb{Z}$  for infinitely many primes $p$.
\end{thm}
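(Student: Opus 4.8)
The plan is to derive Theorem~\ref{thm2} from Theorem~\ref{thm1} by showing that, for fixed odd integers $a\geq 1$ and $n>1$, condition~(ii) of Theorem~\ref{thm1} holds for all but finitely many primes $p$. This is enough: the exceptional set $\{(5,2,3),(5,2,9),(11,2,5),(13,2,7)\}$ is finite, and the hypotheses $\gcd(a,p)=1$ and $a^2<4p^n$ can fail for only finitely many primes $p$, so Theorem~\ref{thm1} will then apply to infinitely many $p$ and give the conclusion for $K(a,p,n)=\mathbb{Q}(\sqrt{-d})$. If $a=1$ there are no divisors $b$ of $a$ other than $a$, so~(ii) is vacuously true and nothing more is needed; hence assume $a\geq 3$ from now on.

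First I would record the elementary observation that $2^{\ell-1}a\neq b^{\ell}$ whenever $b$ is a divisor of $a$ with $b\neq a$ and $\ell$ is a prime divisor of $n$. Indeed such an $\ell$ satisfies $\ell\geq 3$, so $2^{\ell-1}a$ is even, while $b$ divides the odd integer $a$ and is therefore odd, making $b^{\ell}$ odd. Thus each integer $2^{\ell-1}a-b^{\ell}$ is nonzero, and there are only finitely many admissible pairs $(b,\ell)$. Suppose now that condition~(ii) fails for some prime $p$ with $4p^n>a^2$, and write $-d$ for the square-free part of $a^2-4p^n$, so that $d\geq 1$ is square-free and equals the square-free part of the positive integer $4p^n-a^2$. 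Failure of~(ii) means $d\mid 2^{\ell-1}a-b^{\ell}$ for some admissible pair, so $d$ divides the fixed nonzero integer $\prod_{b,\ell}(2^{\ell-1}a-b^{\ell})$, and therefore $d$ belongs to a fixed finite set $\mathcal{D}$ of square-free positive integers depending only on $a$ and $n$.

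It then remains to show that, for each fixed $d_0\in\mathcal{D}$, only finitely many primes $p$ have $d=d_0$. Any such prime yields an integral point on the affine curve
\[
\mathcal{C}_{d_0}\colon\quad d_0\,Y^{2}=4X^{n}-a^{2},
\]
coming from the relation $4p^n-a^2=d_0\,y^2$. Since $a\neq 0$, the polynomial $4X^{n}-a^{2}$ is separable of degree $n\geq 3$, so $\mathcal{C}_{d_0}$ is a smooth curve of genus $(n-1)/2\geq 1$; hence by Siegel's theorem on integral points on affine curves the set of integral points on $\mathcal{C}_{d_0}$ is finite, and in particular only finitely many primes $p$ can occur. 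Taking the union over the finitely many $d_0\in\mathcal{D}$, we conclude that condition~(ii) fails for at most finitely many primes $p$. Removing these, the four exceptional triples, and the finitely many primes with $p\mid a$ or $4p^n\leq a^2$, Theorem~\ref{thm1} applies to each of the remaining (infinitely many) primes $p$, which proves Theorem~\ref{thm2}.

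I expect the only point needing genuine care to be the justification for invoking Siegel's theorem, i.e. checking that $\mathcal{C}_{d_0}$ really is a smooth affine curve of positive genus; this reduces to the separability and degree of $4X^n-a^2$ and so holds exactly because $a\neq 0$ and $n\geq 3$. The remaining ingredients — the parity argument bounding the ``bad'' values of $d$, the finiteness of $\mathcal{D}$, and the bookkeeping of excluded primes and exceptional triples — are routine.
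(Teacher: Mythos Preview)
Your proposal is correct and follows essentially the same route as the paper: both argue that failure of condition~(ii) forces $d$ into a fixed finite set, then invoke Siegel's theorem on the curves $d_0Y^2=4X^n-a^2$ to conclude that each such $d_0$ arises from only finitely many primes, so that Theorem~\ref{thm1} applies to all but finitely many $p$. The paper additionally remarks (using the same Siegel argument for every $d$) that infinitely many \emph{distinct} fields $K(a,p,n)$ occur, but your argument already proves the theorem as stated and yields this strengthening immediately.
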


\section{A result of Bugeaud and Shorey with some preliminaries}

In this section, we deduce and restate of some results which are needed in the proof of Theorem \ref{thm1}. 
We first  restate a result of Bugeaud and Shorey \cite{BS01} concerning the number of solutions of a class of Diophantine equations. We need to fix some notations to state this result.

Let $F_k$ (resp. $L_k$) denote the $k^\text{th}$ term in the Fibonacci (resp. Lucas) sequence defined by $F_0=0,   F_1= 1$,
and $F_{k+2}=F_k+F_{k+1}$ (resp. $L_0=2,  L_1=1$, and $L_{k+2}=L_k+L_{k+1}$), where $k\geq 0$ is an integer. 
For $\lambda\in \{1, \sqrt{2}, 2\}$, we define the subsets $\mathcal{F}, \ \mathcal{G_\lambda},\ \mathcal{H_\lambda}\subset \mathbb{N}\times\mathbb{N}\times\mathbb{N}$ by
\begin{align*}
\mathcal{F}&:=\{(F_{k-2\varepsilon},L_{k+\varepsilon},F_k)\,|\,
k\geq 2,\varepsilon\in\{\pm 1\}\},\\
\mathcal{G_\lambda}&:=\{(1,4p^r-1,p)\,|\,\text{$p$ is an odd prime},r\geq 1\},\\
\mathcal{H_\lambda}&:=\left\{(D_1,D_2,p)\,\left|\,
\begin{aligned}
&\text{$D_1$, $D_2$ and $p$ are mutually coprime positive integers with $p$}\\
&\text{an odd prime and there exist positive integers $r$, $s$ such that}\\
&\text{$D_1s^2+D_2=\lambda^2p^r$ and $3D_1s^2-D_2=\pm\lambda^2$}
\end{aligned}\right.\right\},
\end{align*}
except when $\lambda =2$, the condition `odd' on the prime $p$ should be removed from the above notations. Bugeaud and Shorey proved the following result in \cite{BS01}.
\begin{thma}\label{BST}
For a prime $p$, let $D_1$ and $D_2$ be coprime positive integers such that $\gcd(D_1 D_2, p)=1$. Given $\lambda\in \{1, \sqrt{2}, 2\}$, the number of positive integer solutions $(x, y)$ of the Diophantine equation
 \begin{equation}\label{E1}
 D_1x^2+D_2=\lambda^2p^y
 \end{equation}
is at most one, except for $$
(\lambda,D_1,D_2,p)\in\mathcal{E}:=\left\{\begin{aligned}
&(2,13,3,2),(\sqrt 2,7,11,3),(1,2,1,3),(2,7,1,2),\\
&(\sqrt 2,1,1,5),(\sqrt 2,1,1,13),(2,1,3,7)
\end{aligned}\right\}
$$
and $(D_1, D_2, p)\in
\mathcal{F}\cup \mathcal{G_\lambda}\cup \mathcal{H_\lambda}$.
\end{thma}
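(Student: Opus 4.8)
The plan is to follow the strategy of Bugeaud and Shorey: pass to an imaginary quadratic field, translate a pair of solutions into a coincidence of two terms of a single integer binary recurrence, and then invoke the primitive divisor theorem to force the two indices to agree outside an explicit finite list. Set $K=\mathbb{Q}(\sqrt{-D_1D_2})$. If $(x,y)$ solves \eqref{E1}, then $D_1x^2\equiv-D_2\pmod p$, so $-D_1D_2\equiv(D_1x)^2\pmod p$ is a quadratic residue and $p$ splits in $K$, say $(p)=\mathfrak{p}\bar{\mathfrak{p}}$ (the case $p=2$, which can occur only for $\lambda=2$, is treated analogously). Multiplying \eqref{E1} by $D_1$ and factoring the norm form of $K$ gives
\[
\mu\bar\mu=D_1\lambda^2p^{y},\qquad \mu:=D_1x+\sqrt{-D_1D_2}.
\]
Because $\mu-\bar\mu=2\sqrt{-D_1D_2}$ is coprime to $p$, no prime above $p$ can divide both $\mu$ and $\bar\mu$, so $(\mu)=\mathfrak{a}\,\mathfrak{p}^{y}$, where the ideal $\mathfrak{a}$, supported on the primes dividing $D_1\lambda$, is \emph{independent of the solution}.

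Suppose now that \eqref{E1} has two distinct positive solutions. Let $h$ be the order of the class $[\mathfrak{p}]$ in the ideal class group of $K$, and fix $\pi$ with $\mathfrak{p}^{h}=(\pi)$. Principality of $\mathfrak{a}\,\mathfrak{p}^{y}$ constrains $y$ to a single residue class modulo $h$ and gives, for each solution, $\mu=\zeta\,\rho\,\pi^{k}$ with a fixed $\rho$ (satisfying $(\rho)=\mathfrak{a}\,\mathfrak{p}^{c}$), a root of unity $\zeta\in K$, and an integer $k\ge 0$. Since $\zeta\rho\pi^{k}-\overline{\zeta\rho\pi^{k}}$ is a rational-integer multiple of $\sqrt{-D_1D_2}$, writing $\zeta\rho\pi^{k}-\overline{\zeta\rho\pi^{k}}=\sqrt{-D_1D_2}\,b_k$ defines an integer binary recurrence $b_k$ with characteristic roots $\pi,\bar\pi$, and the solution-independent identity $\mu-\bar\mu=2\sqrt{-D_1D_2}$ collapses to
\[
b_{k}=2 .
\]
(For the two fields $\mathbb{Q}(i)$ and $\mathbb{Q}(\sqrt{-3})$ the root of unity $\zeta$ ranges over a larger group; this is absorbed into the final finite list.) Thus every solution of \eqref{E1} yields an index $k$ with $b_k=2$, and two distinct solutions yield distinct indices $k_1>k_2$ with $b_{k_1}=b_{k_2}=2$.

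It remains to bound the number of such indices. Since $|\pi|=p^{h/2}>1$, one has $|b_k|\asymp p^{hk/2}$ unless $\pi^{k}$ lies very near the real axis; concretely, $b_{k}=2$ forces the argument $k\theta$ of $\pi^{k}=p^{hk/2}e^{ik\theta}$ to be within $O(p^{-hk/2})$ of a multiple of $\pi$. A gap principle, together with the decisive theorem of Bilu, Hanrot and Voutier that every Lehmer number $\tilde u_{m}=(\pi^{m}-\bar\pi^{m})/(\pi-\bar\pi)$ of index $m>30$ possesses a primitive prime divisor, shows that two indices $k_1>k_2$ with $b_{k_1}=b_{k_2}=2$ would force $\tilde u_{k_1-k_2}$ to be composed solely of primes dividing the fixed integer $2D_1D_2\,N(\rho)$, hence to lack a primitive divisor, so that $k_1-k_2\le 30$. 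The defective Lehmer pairs classified by Bilu, Hanrot and Voutier are exactly what produce the Fibonacci--Lucas family $\mathcal{F}$, while the coincidences among Lehmer numbers of small index—and the degenerate situation in which $\pi/\bar\pi$ is a root of unity—produce the parametric families $\mathcal{G}_\lambda$ and $\mathcal{H}_\lambda$ (the latter recording the extra algebraic relation $3D_1s^2-D_2=\pm\lambda^2$ that manufactures a second small solution) together with the sporadic set $\mathcal{E}$. For all remaining data the estimate above compels $k_1=k_2$, hence $y_1=y_2$ and $x_1=x_2$; thus \eqref{E1} has at most one positive integer solution.

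I expect the two genuinely hard points to be the following. The first is making the reduction uniform: one must verify that the auxiliary ideal $\mathfrak{a}$ really is solution-independent and that the $\mathfrak{p}$-part of $(\mu)$ is a clean power of a single prime, which relies on the coprimality hypotheses $\gcd(D_1,D_2)=1$ and $\gcd(D_1D_2,p)=1$ and demands extra care when $2\mid\lambda$ or $p=2$. The second, and more serious, is the complete determination of the exceptional sets: this rests on the full strength of the primitive divisor theorem together with an exhaustive inspection of the finitely many Lehmer pairs of index at most $30$ and of the degenerate pairs, and it is precisely this bookkeeping that is responsible for the appearance of $\mathcal{E}$, $\mathcal{F}$, $\mathcal{G}_\lambda$ and $\mathcal{H}_\lambda$ in the statement.
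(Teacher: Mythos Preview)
The paper does not prove this statement at all: Theorem~A is simply quoted from Bugeaud and Shorey \cite{BS01} and used as a black box (specifically to derive Proposition~2.1 on the equation $dx^2+a^2=4p^y$). There is therefore no ``paper's own proof'' to compare against.

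Your sketch is a reasonable high-level outline of the actual Bugeaud--Shorey argument: factor in $\mathbb{Q}(\sqrt{-D_1D_2})$, reduce a pair of solutions to two coincidences in a Lehmer-type sequence attached to a generator of $\mathfrak{p}^{h}$, and invoke the Bilu--Hanrot--Voutier primitive divisor theorem to bound the difference of indices, with the defective pairs and small-index cases producing $\mathcal{E}$, $\mathcal{F}$, $\mathcal{G}_\lambda$, $\mathcal{H}_\lambda$. As a sketch this is faithful to the original strategy. What is genuinely missing, and what you yourself flag, is the heavy case analysis: turning ``$k_1-k_2\le 30$'' into the exact exceptional list requires the full tabulation of defective Lehmer pairs and a careful treatment of the cases $D_1D_2\in\{1,3\}$ (extra units), $p=2$, and $\lambda\in\{\sqrt{2},2\}$ (ramification of $2$). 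That is not a gap in your logic so much as an acknowledgment that the proof is not self-contained here---which is precisely why the paper under review cites the result rather than proving it.
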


\begin{remark}\label{rkbs}
In \cite{BS01}, the authors failed to include $(\lambda, D_1, D_2, p)=(2,7,25,2)$  in the set $\mathcal{E}$. This gives two solutions to \eqref{E1}, namely, $(x,y)=(1,3), (17, 9)$. It comes from computation, and it can be confirmed by \cite{LE93} that these are the only solutions in positive integers corresponding to this quadruple. 
\end{remark}

We now deduce the following proposition on the number of positive integer solutions $(x, y)$ of the Diophantine equation,
\begin{equation}\label{propd}
dx^2+a^2=4p^y,
\end{equation}
where $d\geq 1$ and $a\geq 1$ are some fixed coprime integers, and $p$ is a prime number. Note that 
both $a$ and $d$ are odd.
\begin{prop}\label{propbs}
Let $d$ and $a$ be odd positive integers. Then for any prime $p$, \eqref{propd} has at most one solution $(x, y)$ in positive integers, except $(x,y, d,a,p)\in\{ (1,1,7,1,2),(1,3,7,5,2), \\ 
(3,4,7,1,2), (17,9,7,5,2)\}$. 
\end{prop}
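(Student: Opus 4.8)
The plan is to reduce \eqref{propd} to an instance of \eqref{E1} and then simply read off what the exceptional sets $\mathcal{E}$, $\mathcal{F}$, $\mathcal{G}_\lambda$, $\mathcal{H}_\lambda$ contribute in this specialized situation. First I would take $\lambda=2$, $D_1=d$ and $D_2=a^2$, and observe that $\gcd(D_1,D_2)=\gcd(d,a^2)=1$ since $\gcd(d,a)=1$, and $\gcd(D_1D_2,p)=1$: indeed, if $p\mid d$ then from $dx^2+a^2=4p^y$ we would get $p\mid a^2$, contradicting coprimality (and for $p=2$ the left side $dx^2+a^2$ with $d,a$ odd is even only when $x$ is odd, but then it is $\equiv 2\pmod 4$ if also $y\geq 2$... actually one must be a touch careful here, see below). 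With this identification, Theorem~\ref{BST} (as corrected in Remark~\ref{rkbs}) says that \eqref{propd} has at most one solution in positive integers unless $(\lambda,D_1,D_2,p)=(2,d,a^2,p)$ lies in $\mathcal{E}\cup\{(2,7,25,2)\}$ or $(D_1,D_2,p)=(d,a^2,p)\in\mathcal{F}\cup\mathcal{G}_2\cup\mathcal{H}_2$.

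Next I would go through each exceptional family under the constraints $D_1=d$ odd, $D_2=a^2$ a perfect square of an odd integer, $\lambda=2$. In $\mathcal{E}$, the only quadruples with $\lambda=2$ are $(2,13,3,2)$, $(2,7,1,2)$, $(2,1,3,7)$; of these, $D_2=3$ is not a perfect square in the first and third, while $(2,7,1,2)$ gives $d=7$, $a=1$, $p=2$, which is genuinely on our exception list (it yields $(x,y)=(1,1),(3,4)$, the solutions $(1,1,7,1,2)$ and $(3,4,7,1,2)$). The extra quadruple $(2,7,25,2)$ from Remark~\ref{rkbs} gives $d=7$, $a=5$, $p=2$, with solutions $(x,y)=(1,3),(17,9)$, i.e.\ $(1,3,7,5,2)$ and $(17,9,7,5,2)$. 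For $\mathcal{G}_2=\{(1,4p^r-1,p)\}$ we need $D_2=4p^r-1$ to be an odd perfect square $a^2$; but then $a^2=4p^r-1\equiv 3\pmod 4$, which is impossible for a square, so $\mathcal{G}_2$ contributes nothing. For $\mathcal{F}=\{(F_{k-2\varepsilon},L_{k+\varepsilon},F_k)\}$, the third coordinate $F_k$ must be a prime $p$ and the second coordinate $L_{k+\varepsilon}$ must be a perfect square; the Lucas numbers that are perfect squares are only $L_1=1$ and $L_3=4$ (a classical result of Cohn), and checking the corresponding small $k$ shows no admissible triple with $F_k$ prime arises, so $\mathcal{F}$ is vacuous here.

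The remaining case $\mathcal{H}_2$ is where I expect the only real work: there $(D_1,D_2,p)=(d,a^2,p)$ with $d,a^2,p$ mutually coprime, $p$ prime, and the side conditions $ds^2+a^2=4p^r$ and $3ds^2-a^2=\pm 4$ for some positive integers $r,s$. I would add the two relations to get $4ds^2=4p^r\pm 4$, i.e.\ $ds^2=p^r\pm1$, and subtract to get $2a^2=4p^r\mp4 - ... $—more cleanly, $ds^2+a^2=4p^r$ together with $3ds^2-a^2=\pm4$ gives $4ds^2=4p^r\pm4$ and $4a^2=8p^r\mp4$, so $a^2=2p^r\mp1$ and $ds^2=p^r\pm1$. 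Now $a^2=2p^r-1$ or $a^2=2p^r+1$ with $a$ odd forces strong congruence restrictions, and combined with $ds^2=p^r\mp1$ one gets a very small finite search: reducing mod $8$ and using that $a$ is odd kills most cases, and the surviving ones can be checked directly to see they either reproduce $d=7,p=2$ (already on the list) or give no new solution, or lie outside the range where $\mathcal{H}_2$ could produce a \emph{second} solution to \eqref{propd}. The main obstacle is therefore the bookkeeping in $\mathcal{H}_2$: one must verify that every triple in $\mathcal{H}_2$ with $D_2$ a perfect square actually corresponds to one of the four listed exceptions (or to a case that still has at most one solution), and this requires carefully solving the pair of simultaneous equations $a^2=2p^r\mp1$, $ds^2=p^r\pm1$ over the positive integers. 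Once that verification is complete—together with a short direct check that for the genuinely exceptional quadruples $(d,a,p)\in\{(7,1,2),(7,5,2)\}$ the listed pairs $(x,y)$ are exactly the solutions (citing \cite{LE93} for $(7,5,2)$ as in Remark~\ref{rkbs}, and a direct elementary argument for $(7,1,2)$)—the proposition follows.
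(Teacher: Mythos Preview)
Your overall strategy matches the paper's exactly: specialize Theorem~\ref{BST} to $(\lambda,D_1,D_2)=(2,d,a^2)$ and walk through $\mathcal{E}$, $\mathcal{F}$, $\mathcal{G}_2$, $\mathcal{H}_2$; your treatments of $\mathcal{F}$ (via Cohn's Lucas-square result, Theorem~\ref{thmCO}) and $\mathcal{G}_2$ (mod~$4$) are identical to the paper's. Two places need correcting. First, in $\mathcal{H}_2$ your elimination contains an arithmetic slip: from $ds^2+a^2=4p^r$ and $3ds^2-a^2=\pm4$ one gets $4a^2=12p^r\mp4$, hence $a^2=3p^r\mp1$, not $2p^r\mp1$. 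The paper's route here is cleaner than your proposed mod~$8$ sieve: read $3ds^2-a^2=\pm4$ modulo~$3$ to exclude the $+$ sign, then combine the two relations to obtain $(a-1)(a+1)=3p^r$; since $a$ is odd this forces $p=2$, and a short analysis of the factor pairs $\{a+1,a-1\}$ finishes the case. Second, for the genuine exception $(d,a,p)=(7,1,2)$ the paper does not use a ``direct elementary argument'' but instead invokes Ljunggren's theorem (Theorem~\ref{thmLJ}) on $(z^y-1)/(z-1)=x^2$, together with reductions mod~$7$, to show that $7x^2+1=2^{y+2}$ has only $(x,y)=(1,1),(3,4)$; you should expect to need this input.
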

We also need the next two results to complete the proof of this proposition. The first result is due to Ljunggren \cite{LJ43} which gives all the solutions of a Diophatine equation, whereas the second one is due of Cohn \cite{Cohn1} which gives  square terms in the Lucas sequence.
\begin{thma}\label{thmLJ}
The solutions of the Diophantine equation,
$$\frac{z^y-1}{z-1}=x^2,~~x>1,z>1, y>2,$$
are $(x, z, y)\in \{(11,3,5),(20, 7, 4)\}$.
\end{thma}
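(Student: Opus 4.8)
The plan is to control the exponent $y$ first and then dispatch the two surviving shapes $y=4$ and $y$ an odd prime. Writing $N(z,y):=\frac{z^y-1}{z-1}=1+z+\cdots+z^{y-1}$, the key structural input is the multiplicativity $N(z,\ell m)=N(z^m,\ell)\,N(z,m)$ together with the elementary fact that $\gcd\bigl(N(z^m,\ell),\,z^m-1\bigr)$ divides $\ell$. Hence if $\ell\mid y$ is an odd prime and the two factors are coprime, both must be squares, so $N(z^m,\ell)=\square$ is again a solution of the same equation with prime exponent $\ell$ and base $z^m>1$; the remaining case $\gcd=\ell$ is handled by pulling out the single factor $\ell$ before repeating the argument. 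A short separate computation shows that $z^{2^k}+1$ (for $k\ge 1$) is never a perfect square when $z\ge1$, which rules out exponents divisible by $8$ and leaves $y=4$ as the only even case. After this descent it suffices to solve the equation for $y=4$ and for $y$ an odd prime.

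For $y=4$ I would factor $N(z,4)=(z+1)(z^2+1)=x^2$ and use $\gcd(z+1,z^2+1)\mid 2$. If $z$ is even the two factors are coprime, forcing $z^2+1=\square$, which is impossible for $z\ge1$. If $z$ is odd then $z+1=2u^2$ and $z^2+1=2v^2$; the second equation is the Pell equation $z^2-2v^2=-1$, and substituting $z=2u^2-1$ yields the quartic
\[
v^2=2u^4-2u^2+1 .
\]
This quartic defines a curve of genus one whose only positive solutions are $u=1,2$ (an instance of Ljunggren's solvable quartics, provable by descent in $\mathbb{Z}[\sqrt2]$), giving $z=1$ (excluded) and $z=7$, hence the solution $(x,z,y)=(20,7,4)$.

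For $y$ an odd prime $\ell$ the small cases are elementary: $\ell=3$ gives $(2x)^2-(2z+1)^2=3$, which forces $z=0$; and for $\ell=5$ one checks $z=2$ directly and sandwiches $4N(z,5)$ strictly between $(2z^2+z)^2$ and $(2z^2+z+1)^2$ for $z>3$, leaving only $z=3$ and the solution $(11,3,5)$. The genuine obstacle is $\ell\ge7$, where the gaps between consecutive squares near $x\approx z^{(\ell-1)/2}$ become too large for any sandwiching argument. Here I would pass to the cyclotomic field $K=\mathbb{Q}(\zeta_\ell)$ and read the equation as the norm relation $N_{K/\mathbb{Q}}(z-\zeta_\ell)=x^2$; analysing the factorisation of the ideal $(z-\zeta_\ell)$, whose norm is a square, forces $z-\zeta_\ell$ to be a unit times a square up to a controlled factor supported above $\ell$, and applying Skolem's $p$-adic method (equivalently, bounding the resulting linear forms in logarithms) rules out any $z>1$ for $\ell\ge7$. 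This last step is the heart of the matter and the part that resists elementary treatment; it is precisely where Ljunggren's analysis is indispensable, and it is the step I expect to be the main difficulty.
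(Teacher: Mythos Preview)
The paper does not prove this statement. Theorem~\ref{thmLJ} is quoted from the literature (Ljunggren~\cite{LJ43}) and used as a black box in the proof of Proposition~\ref{propbs}; there is no argument in the paper to compare your proposal against.

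As for the proposal itself: the reduction to $y=4$ or $y$ an odd prime via the factorisation $N(z,\ell m)=N(z^{m},\ell)\,N(z,m)$ is the standard opening move and is essentially sound, though the branch where the gcd equals $\ell$ needs more care than ``pulling out the single factor $\ell$'' suggests (one must track $\ell$-adic valuations through the descent). Your treatments of $y=3$, $y=5$, and $y=4$ are correct, with the caveat that for $y=4$ you are invoking another nontrivial result of Ljunggren (the finiteness of integral points on $v^{2}=2u^{4}-2u^{2}+1$), so the argument is not self-contained there either. The genuine gap is the case of odd primes $\ell\ge 7$: what you have written is not a proof but a gesture toward cyclotomic norm equations and Skolem's method or linear forms in logarithms. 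That is indeed where the difficulty lies, and neither Ljunggren's original $p$-adic argument nor the later treatments are short; your sketch does not close this step, and you correctly flag it as the main obstacle.
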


\begin{thma}\label{thmCO}
The only perfect squares appear in the Lucas sequence are $L_1=1$ and $L_3=4$.
\end{thma}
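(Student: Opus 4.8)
The plan is to prove that $L_n = x^2$ forces $n$ to be odd and then to branch on whether $3 \mid n$, reducing each branch by an elementary factorisation to a trivial Diophantine equation. First I would eliminate even indices: reducing the recurrence modulo $4$ yields the period-six pattern $L_n \equiv 2,1,3,0,3,3 \pmod 4$, so $L_n \equiv 2$ or $3 \pmod 4$ whenever $n$ is even (in particular $L_0 = 2$). Since every perfect square is $\equiv 0$ or $1 \pmod 4$, no even index can give a square, and henceforth I may assume $n$ is odd and write $L_n = x^2$ with $x \geq 1$.

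Suppose next that $3 \nmid n$. Then $L_n$ is odd, so $x$ is odd, and I would invoke the companion identity $L_n^2 - 5F_n^2 = 4(-1)^n$; for odd $n$ it becomes $x^4 + 4 = 5F_n^2$, which factors as $x^4 + 4 = \bigl((x-1)^2 + 1\bigr)\bigl((x+1)^2 + 1\bigr)$. When $x$ is odd both factors are odd, and since their difference is $4x$ while each is congruent to $2$ modulo $x$, they are coprime. The product $5F_n^2$ has squarefree part $5$, so coprimality forces one factor to be a perfect square; as $(x+1)^2 + 1$ lies strictly between the consecutive squares $(x+1)^2$ and $(x+2)^2$ for $x \geq 1$, it cannot be one, leaving $(x-1)^2 + 1 = \square$, which for $x \geq 2$ again lies strictly between $(x-1)^2$ and $x^2$. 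Hence $x = 1$ and $n = 1$.

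Finally suppose $3 \mid n$, and write $n = 3m$ with $m$ odd. The triplication formula $L_{3m} = L_m^3 - 3(-1)^m L_m$ gives $L_n = L_m(L_m^2 + 3)$ for odd $m$. Reducing modulo $3$ shows $3 \mid L_m$ exactly when $m \equiv 2 \pmod 4$, which is impossible for odd $m$; hence $\gcd(L_m, L_m^2 + 3) = \gcd(L_m, 3) = 1$. Since $L_m(L_m^2 + 3) = x^2$ is a square with coprime factors, both $L_m$ and $L_m^2 + 3$ must be squares; writing $L_m^2 + 3 = v^2$ gives $(v - L_m)(v + L_m) = 3$, so $L_m = 1$, forcing $m = 1$ and $n = 3$.

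The step I expect to require the most care is the coprimality of the two factors in each branch, since it is exactly coprimality that upgrades a factorisation of the form (squarefree constant) $\times$ (perfect square) into the conclusion that each factor is itself a square up to the constant; the $\gcd$ bookkeeping modulo $x$ in the first branch and modulo $3$ in the second is where the argument is most delicate. Once this is in place the residual equations — a square exceeding a square by $1$, and a square exceeding a square by $3$ — are immediate, and the only two solutions $L_1 = 1$ and $L_3 = 4$ drop out. The initial reduction to odd $n$ is essential, as it fixes the sign in $L_n^2 - 5F_n^2 = 4(-1)^n$ and thereby makes the factorisation of $x^4 + 4$ (rather than $x^4 - 4$) available.
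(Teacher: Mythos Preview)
The paper does not supply a proof of this statement: it is quoted as a result of Cohn and invoked as a black box inside the proof of Proposition~\ref{propbs}. There is therefore no ``paper's proof'' against which to compare your argument.

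That said, your proof is correct and self-contained. The elimination of even indices via the period-$6$ residue pattern of $(L_n)$ modulo $4$ is valid. For odd $n$ with $3\nmid n$, the Sophie Germain factorisation $x^4+4=\bigl((x-1)^2+1\bigr)\bigl((x+1)^2+1\bigr)$ together with your coprimality check (both factors are odd, and any common prime divisor divides their difference $4x$ and hence divides $2$) does force exactly one factor to be a perfect square, since $v_5(5F_n^2)=1+2v_5(F_n)$ is odd while every other prime occurs to even power; the squeeze between consecutive squares then yields $x=1$. For $n=3m$ with $m$ odd, the triplication identity $L_{3m}=L_m(L_m^2+3)$ and the fact that $3\mid L_m$ only when $m\equiv 2\pmod 4$ give coprime factors, so $L_m^2+3=v^2$ and hence $L_m=1$. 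Two small points worth making explicit in a polished write-up: the conclusion $L_n=1\Rightarrow n=1$ (respectively $L_m=1\Rightarrow m=1$) uses that $(L_k)_{k\ge 1}$ is strictly increasing, and in the first branch you should state that the factor \emph{not} equal to a square is $5$ times a square rather than leaving it implicit. Cohn's original 1964 argument is in the same spirit but proceeds through Jacobi-symbol computations; your route via the factorisation of $x^4+4$ is a pleasant elementary variant.
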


\begin{proof}[\bf Proof of Proposition \ref{propbs}] 
We note that $(\lambda, D_1, D_2, p)=(2,d,a^2,p)$. Assume that $(2,d,a^2,p)\in\mathcal{E}$. Then  $(d, a^2,p)=(7,1,2)$ and thus \eqref{propd} becomes
\begin{equation}\label{E}
7x^2+1=2^{y+2}.
\end{equation} 
If $y\equiv 1\pmod 3$ with $y\ne 1,4$, then \eqref{E} has no solution by Theorem \ref{thmLJ}. For $y=1,4$, \eqref{E} gives 
$x=1, 3$, respectively. Again if $y\equiv 2\pmod 3$, then \eqref{E} implies $7x^2+1=2\times 8^t$ for some positive integer $t$. Reading this modulo $7$, we arrive at an absurdity. Finally if  $y\equiv 0\pmod 3$, then \eqref{E} implies $7x^2+1=4\times 8^t$ for some positive integer $t$. Reading this modulo $7$, we again arrive at an absurdity. Thus, $(x, y)=(1,1), (3,4)$ are the only solutions of \eqref{E}.  Note that the remaining two exceptions come from Remark \ref{rkbs}. 

We now assume that $(d, a^2,p)\in \mathcal{F}$. Then  there exists an integer $k\geq 2$ such that $F_{k-2\varepsilon}=d, L_{k+\varepsilon}=a^2$ and $F_{k}=p$, where $\varepsilon =\pm 1$. Since $a$ is odd, so that $(k,a,\varepsilon)=(2,1,-1)$ by 
 Theorem \ref{thmCO}. Again utilizing $F_k=p$, we arrive at a contradiction. Therefore $(d, a^2,p)\not\in \mathcal{F}$.

Next suppose that $(d, a^2,p)\in \mathcal{G}_2$. Then $4p^r-1=a^2$ for some positive integer $r$. This is not possible, since $4p^r-1\equiv 3\pmod 4$ and $a^2\equiv 1\pmod 4$. Thus, $(d, a^2,p)\not\in \mathcal{G}_2$.

Finally, let $(d, a^2,p)\in \mathcal{H}_2$. Then there are positive integers $r$ and $s$ such that 
\begin{equation}\label{sr1}
ds^2+a^2=4p^r
\end{equation}
and
\begin{equation}\label{sr2}
3ds^2-a^2=\pm 4.
\end{equation}
We see that `$+$' sign is not possible in \eqref{sr2} by reading it modulo $3$, and thus it becomes
\begin{equation*}\label{rs}
3ds^2-a^2=- 4.
\end{equation*}
This together with \eqref{sr1} give:
$$(a+1)(a-1)=3p^r.$$
As $a$ is odd, so that $p=2$ and $r\geq 2$. Thus 
$$(a+1)(a-1)=3\times 2^r, ~~r\geq 2.$$
This implies $(a+1, a-1)\in\left\{\left(2^{r_1}, 3\times 2^{r_2}\right), \left(3\times2^{r_1},  2^{r_2}\right)\right\}$, where $r=r_1+r_2$.  This further implies $1=2^{r_1-1}-3\times 2^{r_2-1}$ or $1=3\times 2^{r_1-1}-2^{r_2-1}$ which gives $r_1=r_2=1$. Thus $a^2+1=12$, which is not possible. This completes the proof. 
\end{proof}

\section{Proof of Theorems \ref{thm1} and \ref{thm2}}
We begin with the following  proposition which is a crucial ingredient in the proof of Theorem \ref{thm1}. 

\begin{prop}\label{propm}
Let $a,p,n$ and $d$ be as in Theorem \ref{thm1} . Assume that $c$ is the positive integer such that  $a^2-4p^n=-c^2d$. Then for any prime divisor $\ell $ of $n$, the element $\alpha =\dfrac{a+c\sqrt{-d}}{2}$ is not an $\ell$-th power of an element in the ring of integers of $K(a,p,n)=\mathbb{Q}(\sqrt{-d})$.
\end{prop}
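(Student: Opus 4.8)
The plan is to argue by contradiction: suppose $\alpha = \frac{a+c\sqrt{-d}}{2}$ is an $\ell$-th power, say $\alpha = \beta^\ell$ for some $\beta$ in the ring of integers $\mathcal{O}$ of $\mathbb{Q}(\sqrt{-d})$. First I would record the basic norm identity: since $\alpha\bar\alpha = \frac{a^2+c^2d}{4} = p^n$, taking norms gives $N(\beta)^\ell = p^n$, and because $\ell\mid n$ this forces $N(\beta) = p^{n/\ell}$. Next, since $a$ and $c$ are odd (as $a^2-4p^n$ has odd square-free part $-d$ and $p\nmid a$), the element $\alpha$ together with $\bar\alpha$ generates an ideal; I would check that $\gcd(\alpha,\bar\alpha)$ divides $\gcd(a, c\sqrt{-d})$ which is a unit (as $\gcd(a,c)$ divides $\gcd(a^2, c^2d) \mid \gcd(a^2, 4p^n) = 1$ using $\gcd(a,p)=1$), so $\alpha$ and $\bar\alpha$ are coprime. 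Hence the prime $p$ splits (or ramifies) and we get an honest factorization at the level of elements.

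The heart of the argument is to write $\beta = \frac{u+v\sqrt{-d}}{2}$ with $u\equiv v\pmod 2$ (both odd or both even according to whether $2$ is inert/ramified), expand $\beta^\ell$ by the binomial theorem, and compare real and imaginary parts with $\frac{a}{2}$ and $\frac{c}{2}$. This yields
\begin{equation*}
2^{\ell-1}a = \sum_{j \text{ even}} \binom{\ell}{j} u^{\ell-j} v^j (-d)^{j/2}, \qquad 2^{\ell-1}c = \sum_{j \text{ odd}} \binom{\ell}{j} u^{\ell-j} v^j (-d)^{(j-1)/2}.
\end{equation*}
From the first identity, $v$ divides $2^{\ell-1}a$, hence (after controlling the power of $2$, using parity of $u,v$) $v$ divides $a$; and reducing the first identity modulo $v$ and modulo $d$ gives $2^{\ell-1}a \equiv u^\ell \pmod{d}$ and, with $u^2 \equiv$ something tractable modulo small primes, a congruence of the shape $a \equiv \pm b \pmod \ell$ with $b = v$ a divisor of $a$. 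This is precisely what hypotheses (i) and (ii) of Theorem~\ref{thm1} are designed to rule out — (ii) being the mod-$d$ incarnation $2^{\ell-1}a \equiv b^\ell \pmod d$ and (i) the mod-$\ell$ incarnation $a\equiv \pm b\pmod\ell$ (with the $d\ne 3$ proviso handling the degenerate case where $\sqrt{-d}$-part collapses, i.e.\ $\mathbb{Q}(\sqrt{-3})$ has extra units). I would need to treat separately the case $v = \pm a$ (so $\beta$ is, up to a unit and scaling, essentially $\sqrt{-d}$ itself), where the norm equation $N(\beta) = p^{n/\ell}$ combined with $N(\beta) = \frac{u^2+a^2 d}{4} \ge \frac{a^2 d}{4}$ and $a^2 < 4p^n$ pins down the possibilities, funneling into the Diophantine equation $dx^2 + a^2 = 4p^y$ of Proposition~\ref{propbs}; the listed exceptional tuples $(5,2,3),(5,2,9),(11,2,5),(13,2,7)$ are exactly the cases where that equation has a second solution and the argument genuinely breaks.

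The main obstacle I anticipate is the bookkeeping around the prime $2$ and the two shapes of $\mathcal{O}$ (whether $-d \equiv 1 \pmod 4$ so $\mathcal{O} = \mathbb{Z}[\frac{1+\sqrt{-d}}{2}]$, versus $\mathcal{O} = \mathbb{Z}[\sqrt{-d}]$), since $\alpha$ has a genuine $2$ in its denominator and one must track exactly which power of $2$ divides each binomial term to conclude "$v \mid a$" rather than merely "$v \mid 2^{\ell-1}a$"; the unit group is harmless once $d \ne 1, 3$ (only $\pm 1$), which is why $d=3$ is excluded in (i) and why one checks $d\ne 1$ automatically from $a^2 < 4p^n$ forcing $d > 1$. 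A secondary subtlety is ensuring the exceptional set is exactly the four tuples claimed and no more: this requires cross-referencing Proposition~\ref{propbs}'s exception list $\{(1,1,7,1,2),(1,3,7,5,2),(3,4,7,1,2),(17,9,7,5,2)\}$ with the constraint $a^2 < 4p^n$ and $\ell \mid n$, $n$ odd, which cleanly discards the $(d,a)=(7,1)$ entries (as $7 \cdot 3^2 + 1 = 2^6$ would need $n$ with $a=1$, covered by the Gross–Rohrlich case) and retains only the $(7,5,2)$-related ones for suitable $n$, yielding the stated four.
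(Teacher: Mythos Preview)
Your overall strategy matches the paper's: write $\beta=\frac{u+v\sqrt{-d}}{2}$, expand $\beta^{\ell}$, and read off divisibility and congruences from the real part. But you have interchanged the roles of $u$ and $v$, and this is not cosmetic. In the real-part identity
\[
2^{\ell-1}a \;=\; \sum_{m=0}^{(\ell-1)/2}\binom{\ell}{2m}\,u^{\ell-2m}v^{2m}(-d)^{m},
\]
every summand carries a factor of $u$ (since $\ell$ is odd, $\ell-2m\ge 1$), whereas the $m=0$ term $u^{\ell}$ has no factor of $v$ at all. Hence it is $u$, not $v$, that divides $2^{\ell-1}a$; and since $a$ odd forces $-d\equiv 1\pmod 4$ and then $u,v$ both odd, one obtains directly $u=\pm b$ with $b\mid a$. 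The congruences that collide with hypotheses~(i) and~(ii) of Theorem~\ref{thm1} then come from reducing this same identity modulo~$\ell$ (using $\ell\mid\binom{\ell}{2m}$ for $1\le m\le(\ell-1)/2$ and Fermat's little theorem) and modulo~$d$, both of which constrain $u$, i.e.\ $b$, not $v$.

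The swap also breaks your endgame. The residual case is $u=\pm a$, not $v=\pm a$; taking norms of $\beta$ then yields $dv^{2}+a^{2}=4p^{\,n/\ell}$, which together with $dc^{2}+a^{2}=4p^{n}$ gives two distinct positive solutions of the equation in Proposition~\ref{propbs}, and that is the contradiction. Your version $v=\pm a$ produces instead $u^{2}+a^{2}d=4p^{\,n/\ell}$, an equation of a different shape that does not feed into Proposition~\ref{propbs}. Finally, the worry about bookkeeping at the prime~$2$ is unnecessary: because $a$ is odd one always has $-d\equiv 1\pmod 4$, so $\mathcal{O}=\mathbb{Z}\bigl[\frac{1+\sqrt{-d}}{2}\bigr]$, $u$ and $v$ are odd, and ``$u\mid 2^{\ell-1}a$'' immediately gives ``$u\mid a$'' with no case split.
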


\begin{proof}
Assume that $\ell$ is a prime divisor of $n$. Since $n$ is odd, so is $\ell$. Note that $-d\equiv 1\pmod 4$.

Suppose that $\alpha$ is an $\ell$-th power of an element in the ring of integers of $K(a,p,n)$. Then  there are integers $u$ and $v$ with same parity such that 
\begin{equation}\label{al}
\frac{a+c\sqrt{-d}}{2}=\left(\frac{u+v\sqrt{-d}}{2}\right)^{\ell}.
\end{equation}
It can be easily checked that both $u$ and $v$ are odd using the fact that $a$ is odd. 
We now equate the real parts in \eqref{al} to get
\begin{equation}\label{rp}
2^{\ell-1}a=u^{\ell}+\sum_{m=1}^{(\ell-1)/2} \binom{\ell}{2m} u^{\ell-2m}v^{2m}d^m.
\end{equation}
This implies $u\mid 2^{\ell-1}a$. Since $u$ is odd, this further implies $u=\pm b$ for some positive integer $b$ such that $b\mid a$.

We first consider $b\ne a$. Then replacing $u=\pm b$ in \eqref{rp} and reading modulo $\ell$, we get
$2^{\ell-1}a\equiv \pm b^{\ell}\pmod \ell$. This further implies by Fermat's little theorem that $a\equiv \pm b\pmod \ell$ which contradicts to (i).  
Again we put $u=\pm b$ in \eqref{rp}, and then read modulo $d$ to get
$2^{\ell-1}a\equiv b^{\ell}\pmod d$ which contradicts to (ii). 

We now consider $b=a$, that is $u=\pm a$. In this case,  \eqref{al} becomes
$$\frac{a+c\sqrt{-d}}{2}=\left(\frac{\pm a+v\sqrt{-d}}{2}\right)^{\ell}.$$ 
Taking norm on both sides, and then using 
\begin{equation}\label{hy}
dc^2+a^2=4p^n,
\end{equation}
we get
\begin{equation}\label{se}
dv^2+a^2=4p^{n/\ell}.
\end{equation}
Since $\ell$ is a prime divisor of $n$, so that  \eqref{hy} and \eqref{se} together give two distinct solutions, namely $(x,y)=(c,n)$ and $(x,y)=(|v|,n/ \ell)$ of \eqref{propd} in positive integers. This is not possible by Proposition \ref{propbs}. Thus, we complete the proof.
\end{proof}

\begin{proof}[\bf Proof of Theorem \ref{thm1}]
Assume that $c$ is the positive integer satisfying $a^2-p^n=-c^2d$. Since $\gcd(a,p)=1$, so that $p$ splits in $K(a, p, n)=\mathbb{Q}(\sqrt{-d})$. We define, $\alpha: =\dfrac{a+c\sqrt{-d}}{2}$. Note that $\alpha$ and $\bar{\alpha}$ are co-prime, and $N(\alpha)=\alpha \bar{\alpha}=p^n$. Then $(\alpha)= \mathfrak{a}^n$, where $\mathfrak{a}$ is a prime ideal in the ring of integers of $K(a, p, n)$ above $p$. Assume that $[\mathfrak{a}]$ is the ideal class containing $\mathfrak{a}$ in the class group of $K(a, p, n)$. Let $m$ be the order of  $[\mathfrak{a}]$. Then $m$ divides $n$ and $[\mathfrak{a}]^m$ is principal.  Thus we can write $n=tm$ for some positive integer $t$ and $[\mathfrak{a}]^m=(\beta)$ for some integer in $K(a, p, n)$. Therefore,
\begin{equation}\label{beta}
(\alpha)=(\beta)^t=(\beta^t). 
\end{equation}
Since $-d\equiv 1\pmod 4$, so that $-d\ne -1$. For (ii), if $d=3$ then the condition $2^{\ell-1}a \not \equiv \pm 1 \pmod{d}$ shows that $3\mid a$ and thus by \eqref{hy}, $p=3$ which contradicts to $\gcd(a,p)=1$. Therefore $d\ne 3$, and hence $K(a, p, n)\not\in\{\mathbb{Q}(\sqrt{-1}), \mathbb{Q}(\sqrt{-3})\}$.  Thus the only units in the ring of integers of
$K(a, p, n)$ are $\pm1$.  Hence, \eqref{beta} gives $\alpha =\pm \beta^{t}$.  Since $n$ is odd, so is $t$ and thus $\alpha= \gamma^t$ for some integer $\gamma$ in $K(a, p, n)$.  By Proposition \ref{propm}, we get $t=1$ and hence $m=n$. This completes the proof. 
\end{proof}

We now give a proof of Theorem \ref{thm2} using the celebrated Siegel's theorem (see, \cite{ES, LS}) on integral points on a affine curve.

\begin{proof}[\bf Proof of Theorem $\ref{thm2}$]
Assume that $a\geq 1$ and $n> 1$ are odd integers. For each prime $p$ coprime to $a$, by Theorem \ref{thm1}, the class group of $K(a,p,n)$ has a subgroup isomorphic to $\mathbb{Z}/n\mathbb{Z}$ provided $2^{\ell-1}a\not\equiv \pm b^\ell \pmod {d}$, where $b(\ne a)$ is a divisor of $a$ and $\ell$ is a prime divisor of $n$. On the other hand, one gets $d\leq 2^{\ell-1}a-b^\ell$ when $2^{\ell-1}a\equiv \pm b^\ell \pmod{d}$. 

Let $D>1$ be an integer. Then the curve, 
\begin{equation}\label{sc}
Dx^2+a^2=4y^n
\end{equation}
is an irreducible algebraic curve (see \cite[Theorem 1B]{WS}) of genus bigger than $0$. Thus we can conclude using Siegel's theorem (see \cite{LS}) that there are only finitely many integral points $(x,y)$ on the curve \eqref{sc}. Therefore, for each $d>1$ there are only finitely many primes $p$ satisfying 
$$dc^2+a^2=4p^n.$$
As $K(a,p,n)=\mathbb{Q}(\sqrt{-d})$, so that for each odd integer $a\geq 1$, there are infinitely many fields $K(a,p,n)$. Furthermore, since $a^2-4p^n=-c^2d$, so that one gets $d>2^{\ell-1}a-b^\ell$ for sufficiently large $p$. Therefore by Theorem \ref{thm1}, we complete the proof.
\end{proof}

\section{Some applications and numerical examples}
In this section, we first deduce some results concerning the divisibility of the class number of some families of imaginary quadratic fields from Theorem \ref{thm1}. Secondly, we discuss some known results and show how these results can be deduced from Theorem \ref{thm1}.  Thirdly, we give some numerical examples supporting our results. Note that by $h(d)$, we mean the class number of $\mathbb{Q}(\sqrt{d})$.

\begin{thm}\label{thm4.1}
Let $p$ and $q$ be distinct primes with $q\geq 3$. For any integer $m\geq 1$, the class number of $\mathbb{Q}(\sqrt{q^2-4p^{q^m}})$ is divisible by $q^m$. 
\end{thm}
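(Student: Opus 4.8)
The plan is to derive Theorem \ref{thm4.1} as a direct specialization of Theorem \ref{thm1}, taking $a=q$ and $n=q^m$. Here $q\geq 3$ is an odd prime and $n=q^m\geq 3$ is odd, so the hypotheses on $a$ and $n$ in Theorem \ref{thm1} are met; the coprimality $\gcd(a,p)=\gcd(q,p)=1$ holds because $p$ and $q$ are distinct primes; and we may assume $q^2<4p^{q^m}$ since otherwise $q^2-4p^{q^m}$ is a nonzero integer and in fact this inequality is automatic once we observe $4p^{q^m}\geq 8 > q^2$ is false for large $q$ — so instead I would note that $q^2-4p^{q^m}<0$ is what actually matters for the field to be imaginary, and when $q^2-4p^{q^m}>0$ one argues separately or simply restricts to the (infinitely many, indeed all but finitely many) $p$ for which it is negative. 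Cleanest is to observe that for $p\geq 2$ and $m\geq 1$ we have $4p^{q^m}\geq 4\cdot 2^{q}>q^2$ for all primes $q\geq 3$, so $q^2-4p^{q^m}<0$ always and the field is genuinely imaginary quadratic.

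The key step is checking condition (i) of Theorem \ref{thm1}. Since $a=q$ is prime, its only positive divisors are $1$ and $q$; the only divisor $b$ of $a$ with $b\neq a$ is $b=1$. The only prime divisor $\ell$ of $n=q^m$ is $\ell=q$. So condition (i) requires $a\not\equiv\pm b\pmod\ell$, i.e. $q\not\equiv\pm 1\pmod q$; since $q\equiv 0\pmod q$ and $q\nmid 1$, this is immediate. It also requires $d\neq 3$, where $-d$ is the squarefree part of $q^2-4p^{q^m}$; I would handle this by noting that if $d=3$ then $3c^2=4p^{q^m}-q^2$, and reducing modulo $3$ gives $p^{q^m}\equiv q^2\pmod 3$, which combined with the structure forces a contradiction unless $p=3$ or $q=3$ — and one must then treat the genuinely exceptional small cases by hand, or absorb them into the exception list. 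Actually the honest route is: condition (ii) is an alternative to (i), and as the proof of Theorem \ref{thm1} shows, when $d=3$ condition (ii)'s hypothesis $2^{\ell-1}a\not\equiv\pm 1\pmod 3$ forces $3\mid a$, hence $a=q=3$, hence $p=3$ by \eqref{hy}, contradicting $\gcd(a,p)=1$; so $d=3$ simply cannot occur here, and both (i) and (ii) go through.

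Next I would verify that the excluded triples $(a,p,n)\in\{(5,2,3),(5,2,9),(11,2,5),(13,2,7)\}$ do not arise: in each of these $a$ is a prime ($5,5,11,13$) and $n$ is a power of that same prime ($3,9,5,7$)? No — $n=3,9,5,7$ while $a=5,5,11,13$, so $n$ is not a power of $a$ in any of these cases. Hence the specialization $a=q$, $n=q^m$ never lands in the exception set, and Theorem \ref{thm1} applies unconditionally. Therefore the class group of $\mathbb{Q}(\sqrt{-d})=\mathbb{Q}(\sqrt{q^2-4p^{q^m}})$ has a subgroup isomorphic to $\mathbb{Z}/q^m\mathbb{Z}$, and in particular $q^m$ divides its class number $h(q^2-4p^{q^m})$.

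I expect no serious obstacle: the entire content is bookkeeping to confirm that the hypotheses of Theorem \ref{thm1} are satisfied and the exceptions avoided. The one point requiring a moment's care is ruling out $d=3$ and, more generally, making sure the square-free part $-d$ behaves as expected (e.g. that $c$ and $d$ are well-defined with $d$ odd, which follows since $q^2-4p^{q^m}\equiv 1\pmod 4$ when $p$ is odd, and requires a brief separate check when $p=2$). I would write the proof in three or four sentences: set $a=q$, $n=q^m$; check oddness, coprimality, and $a^2<4p^n$; verify (i) using primality of $q$ together with the $d\neq 3$ argument inherited from the proof of Theorem \ref{thm1}; observe the exception set is avoided since $n=q^m$ is never one of $3,9,5,7$ paired with $a$ one of $5,5,11,13$; conclude.
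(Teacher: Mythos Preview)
Your overall strategy---specialize Theorem~\ref{thm1} with $a=q$ and $n=q^m$, then verify condition~(i)---is exactly the paper's approach, and your checks that $q\not\equiv\pm 1\pmod q$, that $q^2<4p^{q^m}$, and that the four exceptional triples are avoided are all correct.

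The genuine gap is your handling of $d\neq 3$. Your first attempt (reduce $3c^2=4p^{q^m}-q^2$ modulo $3$) does not force a contradiction in general: when $3\nmid pq$ one only gets $p\equiv 1\pmod 3$, which is perfectly possible. Your second attempt is circular: the passage in the proof of Theorem~\ref{thm1} that deduces $3\mid a$ when $d=3$ \emph{assumes} condition~(ii) holds, and you have not verified (ii)---indeed (ii) can fail for $d=3$, e.g.\ if $q\equiv 1\pmod 3$ then $2^{q-1}q\equiv q\equiv 1\pmod 3$. So neither route closes the case. The paper instead writes ``Using Theorem~\ref{BST}, we see that $d\neq 3$.'' One way to unpack this: your mod-$3$ reduction does show that $d=3$ forces $q\neq 3$, hence $\gcd(q^m,6)=1$; since $h(-3)=1$, the ideal $\mathfrak{a}$ is principal and $\alpha$ becomes a $q^m$-th power up to a sixth root of unity, which can be absorbed, making $\alpha$ a genuine $q$-th power. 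This contradicts Proposition~\ref{propm}, whose proof uses only the congruence part of~(i) together with Proposition~\ref{propbs} (and hence Theorem~\ref{BST}), not the clause $d\neq 3$. That is the missing ingredient you need.
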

We take $a=q$ and $n=q^m$ in Theorem \ref{thm1}.  Then $q^2-4p^{q^m}=-c^2d$. Using Theorem \ref{BST}, we see that $d\ne 3$. Since $q$ is prime, so that one needs to check the condition (i) in Theorem \ref{thm1} only for $b=\pm 1$ and $\ell=q$, that is $q\not\equiv \pm 1\pmod q$. Hence by Theorem \ref{thm1}, $q^m$ divides $ h(q^2-4p^{q^n})$. 

\begin{thm}\label{thm4.2}
Let $m$ be an odd prime and $t\geq 1$ an integer. For a prime $p$, let $q$ be a prime in the set $\{ms+r: s\in \mathbb{N}\cup\{0\}, 2\leq r\leq m-2\}$ such that $q^2-4p^{m^t}=-c^2mD$, where $c$ is an integer and $D$ is a square-free positive integer coprime to $m$. Then $m^t$ divides $h(-mD)$. 
\end{thm}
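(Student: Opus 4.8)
The plan is to derive Theorem \ref{thm4.2} as a direct application of Theorem \ref{thm1} with the specific choice $a=q$ and $n=m^t$, exactly as Theorem \ref{thm4.1} was obtained. First I would record the setup: with $a=q$ prime and $n=m^t$, the hypothesis $q^2-4p^{m^t}=-c^2mD$ tells us that the square-free part of $q^2-4p^{m^t}$ is $-d$ with $d=mD$ (using that $D$ is square-free and coprime to $m$), and also that $\gcd(q,p)=1$, since otherwise $q\mid 4p^{m^t}$ forces $p=q$, and then $q^2-4q^{m^t}=-c^2mD$ would force $q\mid mD$, contradicting $\gcd(m,D)=1$ together with $q\ne m$ (note $q$ lies in a residue class $r$ with $2\le r\le m-2$, so $q\ne m$). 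Since $q\ge 2$ and $q\equiv r\pmod m$ with $r\ne 0$, we also get $q$ is odd once we check $m$ odd prime forces the relevant parity; in any case $a^2<4p^n$ is automatic from $q^2-4p^{m^t}=-c^2mD<0$.

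Next I would verify the exceptional-tuple condition. The only prime divisor $\ell$ of $n=m^t$ is $\ell=m$, and since $q$ is prime the only divisor $b$ of $a=q$ other than $a$ itself is $b=1$. So condition (i) of Theorem \ref{thm1} reduces to the single requirement $q\not\equiv\pm1\pmod m$, which is precisely guaranteed by the assumption that $q$ lies in $\{ms+r:s\ge0,\ 2\le r\le m-2\}$: such a $q$ is congruent to some $r$ with $2\le r\le m-2$, hence $q\not\equiv 0,1,-1\pmod m$. The remaining clause of (i) is $d\ne 3$; this I would handle as in Theorem \ref{thm4.1} by invoking Theorem \ref{BST} (Bugeaud--Shorey), noting that $d=mD=3$ would force $m=3,D=1$ and an equation $q^2+3c^2=4p^{3^t}$ falling under the exceptional set, which can be excluded, or alternatively ruled out directly by a congruence argument modulo small moduli. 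Then $d=mD\ne 3$, so Theorem \ref{thm1} applies (and $(a,p,n)=(q,p,m^t)$ is not among the four listed exceptions since those all have $n\le 9$ with $a\notin\{q\}$ matching — one should double-check none of $(5,2,3),(5,2,9),(11,2,5),(13,2,7)$ has the form $(q,p,m^t)$ with $q$ prime and $q\not\equiv\pm1\bmod m$; for instance $(5,2,9)$ has $n=9=3^2$, $q=5\equiv 2\pmod3$, so this one genuinely must be excluded by hypothesis, which is why the statement should perhaps carry that caveat — I would flag this).

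Applying Theorem \ref{thm1} then yields that the class group of $\mathbb{Q}(\sqrt{-d})=\mathbb{Q}(\sqrt{-mD})$ has a subgroup isomorphic to $\mathbb{Z}/m^t\mathbb{Z}$, and in particular $m^t\mid h(-mD)$, which is the claim. I expect the main obstacle to be the bookkeeping around the excluded tuples and the $d=3$ case: one must be careful that the family $\mathbb{Q}(\sqrt{q^2-4p^{m^t}})$ does not secretly overlap the four sporadic exceptions of Theorem \ref{thm1} (the tuple $(5,2,9)$ is the dangerous one, with $m=3$, $t=2$, $q=5$, $p=2$), and one must confirm $mD\ne3$ rigorously. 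Everything else is a mechanical specialization of the already-proved Theorem \ref{thm1}, so no new ideas are required beyond this verification.
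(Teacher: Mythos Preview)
Your overall plan---specialize Theorem~\ref{thm1} with $a=q$, $n=m^t$---is exactly what the paper does, but you verify condition~(i) whereas the paper verifies condition~(ii). The paper's argument runs: from $q\not\equiv\pm1\pmod m$ and Fermat's little theorem $2^{m-1}\equiv1\pmod m$ one gets $2^{m-1}q\not\equiv\pm1\pmod m$; since $m\mid d=mD$, this lifts to $2^{m-1}q\not\equiv\pm1\pmod d$, which is exactly condition~(ii) with $b=1$, $\ell=m$. This route never touches the clause $d\ne3$.

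Your route via~(i) also works, and the obstacles you flag dissolve on inspection. The range $2\le r\le m-2$ is empty when $m=3$, so the hypothesis on $q$ forces $m\ge5$; hence $d=mD\ge5$ and $d\ne3$ is automatic---no appeal to Bugeaud--Shorey is needed. The four sporadic tuples of Theorem~\ref{thm1} are likewise excluded by the hypotheses: for $(5,2,3)$, $(5,2,9)$, $(11,2,5)$ the resulting $d$ equals $7$, which is not divisible by the relevant $m\in\{3,3,5\}$, so the assumption $d=mD$ fails; for $(13,2,7)$ one has $13\equiv-1\pmod7$, so $q=13$ is not in the admissible residue set. Thus no caveat is required. (Your side argument that $p\ne q$ follows from $q\nmid mD$ is not quite right---$q$ could divide $c$---but neither the paper nor Theorem~\ref{thm1} actually hinges on resolving this here.)
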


We first note that this result also holds for $D=1$. Put $a=q$ and $n=m^t$ in Theorem \ref{thm1}. Then $a^2-4p^n=-c^2d$, where $d=mD$. To apply Theorem \ref{thm1}, we need to check the condition (ii) only for $b=\pm1$ and $\ell=m$, since both $q$ and $m$ are primes. 

As $q\in \{ms+r: s\in \mathbb{N}\cup\{0\}, 2\leq r\leq m-2\}$, so that $q\not\equiv\pm 1\pmod m$. Also $2^{m-1}\equiv 1\pmod m$ since $m$ is odd. These together imply $2^{m-1}q\not\equiv \pm 1\pmod m$, which further implies $2^{m-1}q\not\equiv\pm 1\pmod d$. Thus by Theorem \ref{thm1}, $m^t$ divides $h(-mD)$.     

We now look at the following result of Gross and Rohrlich.
\begin{thm}[{\cite[Theorem 5.3]{GR78}}] \label{thmGR}
Let $n>1$ be an odd integer and $p>1$ an integer. Then except for $p=2$, the class numbers of the imaginary quadratic fields $\mathbb{Q}(\sqrt{1-4p^n})$ are divisible by $n$. For $p=2$, the same holds if $n$ is square-free and $\gcd(n, 3)=1$. 
\end{thm}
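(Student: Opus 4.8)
The plan is to deduce Theorem \ref{thmGR} from Theorem \ref{thm1} by specialising $a$ to $1$, and then to handle the composite values of $p$ (which Theorem \ref{thm1} does not cover) by a short direct argument. Write $-d$ for the square-free part of $1-4p^n$, so that $\mathbb{Q}(\sqrt{1-4p^n})=K(1,p,n)=\mathbb{Q}(\sqrt{-d})$. The decisive simplification is that the only divisor of $a=1$ is $a$ itself, so the quantifiers over $b$ occurring in conditions (i) and (ii) of Theorem \ref{thm1} range over the empty set; both conditions therefore hold vacuously, and moreover $a^2=1<4p^n$ and $\gcd(1,p)=1$ hold automatically. The one remaining hypothesis to check is $d\neq 3$, which I would obtain from Theorem \ref{BST} exactly as in the deduction of Theorem \ref{thm4.1} (the quadruple $(2,3,1,p)$ lies outside $\mathcal{E}$ and the triple $(3,1,p)$ outside $\mathcal{F}\cup\mathcal{G}_2\cup\mathcal{H}_2$), so that Theorem \ref{BST} applies to $3x^2+1=4p^y$ and forces $d\neq 3$.

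Granting this, for every prime $p$ and every odd integer $n\geq 3$ the hypotheses of Theorem \ref{thm1} are met with no exceptional triple occurring, since every triple in the exceptional set of Theorem \ref{thm1} has $a\geq 5$. Hence the class group of $\mathbb{Q}(\sqrt{-d})=\mathbb{Q}(\sqrt{1-4p^n})$ has a subgroup isomorphic to $\mathbb{Z}/n\mathbb{Z}$, and in particular $n$ divides $h(1-4p^n)$. Nothing in this step requires $p$ to be odd, so it already yields the $p=2$ assertion of Theorem \ref{thmGR} without the hypotheses that $n$ be square-free and coprime to $3$; in this sense Theorem \ref{thm1} is strictly stronger than Theorem \ref{thmGR} for prime $p$.

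It remains to treat composite $p$, for which Theorem \ref{thm1} is not available; here I would rerun the proof of Theorem \ref{thm1}, the only adjustments coming at the two places where $p$ is used as a prime. Set $\alpha=\frac{1+c\sqrt{-d}}{2}$, where $1-4p^n=-c^2d$; then $N(\alpha)=p^n$ and $\alpha+\bar\alpha=1$, so the ideals $(\alpha)$ and $(\bar\alpha)$ are coprime. Since $(\alpha)(\bar\alpha)=(p)^n$ and any ramified or inert prime of $\mathbb{Q}(\sqrt{-d})$ would divide both $(\alpha)$ and $(\bar\alpha)$, every prime dividing $(p)$ splits; hence $(p)=\mathfrak{a}\bar{\mathfrak{a}}$ with $\mathfrak{a}$ coprime to $\bar{\mathfrak{a}}$ and $(\alpha)=\mathfrak{a}^n$. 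As $d\neq 1$ (because $-d\equiv 1\pmod 4$) and $d\neq 3$ (as above), the only units in the ring of integers are $\pm 1$, so -- exactly as in the proof of Theorem \ref{thm1} -- it is enough to show that $\alpha$ is not an $\ell$-th power in the ring of integers for any prime $\ell\mid n$. The replacement for the appeal to Proposition \ref{propm} (which rests on the theorem of Bugeaud and Shorey and needs $p$ prime) is elementary: if $\alpha=\delta^\ell$, then $\delta^\ell+\bar\delta^\ell=\alpha+\bar\alpha=1$, and since $\ell$ is odd every term in the expansion of $\delta^\ell+\bar\delta^\ell$ as a polynomial in $\delta+\bar\delta$ and $\delta\bar\delta$ carries a factor $\delta+\bar\delta$, which forces $\delta+\bar\delta=\pm 1$; inspecting the coefficient of $\delta\bar\delta$ in that expansion when $\delta+\bar\delta=1$, and reducing modulo $\delta\bar\delta$ when $\delta+\bar\delta=-1$, then gives $p^{n/\ell}\mid\ell$ or $p^{n/\ell}\mid 2$, and both are impossible since $p$ is composite while $\ell$ is an odd prime. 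Arguing now as in the proof of Theorem \ref{thm1}, the ideal class of $\mathfrak{a}$ has order exactly $n$, so the class group contains a subgroup isomorphic to $\mathbb{Z}/n\mathbb{Z}$ and $n\mid h(1-4p^n)$.

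The step I expect to require the most care is the composite case, simply because Theorem \ref{thm1} is stated only for prime $p$, so the Bugeaud--Shorey input has to be replaced; fortunately the replacement -- the identity $\delta^\ell+\bar\delta^\ell=1$ together with $p$ being composite -- is entirely elementary and brings in no new deep ingredient. A secondary technical point is the exclusion $d\neq 3$, which in all cases comes down to the (classically known) absence of positive integer solutions of $3x^2+1=4p^y$ with $y$ odd and $y\geq 3$.
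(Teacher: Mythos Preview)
Your deduction for prime $p$ is exactly what the paper does: the paper's entire argument is the one sentence ``We get Theorem~\ref{thmGR} \ldots\ for prime $p$ \ldots\ from Theorem~\ref{thm1} by putting $a=1$.'' Note that the paper does \emph{not} attempt to recover Theorem~\ref{thmGR} for composite $p$ from Theorem~\ref{thm1}; it only cites Gross--Rohrlich for that. So your treatment of composite $p$ is genuinely additional content beyond what the paper provides, and your elementary replacement for Bugeaud--Shorey --- using $\delta^\ell+\bar\delta^\ell=1$, the divisibility $(\delta+\bar\delta)\mid(\delta^\ell+\bar\delta^\ell)$ for odd $\ell$, and then reading the Dickson/Waring expansion of $\delta^\ell+\bar\delta^\ell$ modulo $\delta\bar\delta=p^{n/\ell}$ --- is a nice and correct idea that exploits compositeness of $p$ in a way the paper's machinery cannot.

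Two remarks on $d\neq 3$. First, for prime $p$ you do not actually need it: the hypothesis ``(i) or (ii)'' in Theorem~\ref{thm1} is imposed only for $a\neq 1$, so for $a=1$ the theorem applies with no side condition, and there is nothing to check. Second, your appeal to Theorem~\ref{BST} to ``force $d\neq 3$'' is not correct as written: Bugeaud--Shorey gives \emph{at most one} solution of $3x^2+1=4p^y$, not zero, so the existence of the single solution $(c,n)$ produces no contradiction unless you exhibit a second one. The paper makes the same terse claim in deducing Theorem~\ref{thm4.1} without further detail. For your composite-$p$ argument you genuinely need $d\neq 3$ (to control the unit group), and there Theorem~\ref{BST} is unavailable since $p$ is not prime; your fallback to the ``classically known'' unsolvability of $3x^2+1=4p^y$ for odd $y\geq 3$ and arbitrary $p>1$ would need a reference or a short proof. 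This is the one soft spot in an otherwise sound extension of the paper's argument.
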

Later, Cohn proved a general result in \cite{CO02} when $p=2$. Precisely, he proved: 
\begin{thm}[{\cite[Theorem]{CO02}}]\label{thmC}
Let $n \geq 2$ be an integer and $d$ the square-free part of $2^{n+2} -1$. Then except for the case $n = 4$, $n$  divides $h(-d)$.
\end{thm}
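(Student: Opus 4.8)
The plan is to derive Theorem~\ref{thmC} as a special case of Theorem~\ref{thm1} by taking $a=1$ and $p=2$, and then separately handling the (few) integers $n$ not covered by Theorem~\ref{thm1} as stated. First I would observe that when $a=1$, the hypotheses (i) and (ii) of Theorem~\ref{thm1} are vacuous, since the only divisor $b$ of $a=1$ other than $a$ itself does not exist; so for $a=1$, Theorem~\ref{thm1} says directly that for every odd prime divisor $\ell$ of $n$ the class group of $\mathbb{Q}(\sqrt{1-4\cdot 2^n})=\mathbb{Q}(\sqrt{-d})$ contains a subgroup isomorphic to $\mathbb{Z}/n\mathbb{Z}$, hence $n\mid h(-d)$, for all odd $n\geq 3$ except possibly those with $(a,p,n)=(1,2,n)$ lying in the exceptional list $\{(5,2,3),(5,2,9),(11,2,5),(13,2,7)\}$. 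Since $a=1$ never equals $5$, $11$ or $13$, none of these exceptions applies, so the conclusion holds for \emph{all} odd $n\geq 3$.

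The remaining work is to treat even $n$, where Theorem~\ref{thm1} does not apply. Here I would reduce to the odd part: write $n=2^{j}n_{0}$ with $n_{0}$ odd, and note that it suffices to prove $n\mid h(-d)$, where $d$ is the square-free part of $2^{n+2}-1$. The key point is to produce an ideal $\mathfrak{a}$ above $2$ with $(\alpha)=\mathfrak{a}^{n}$ for $\alpha=\frac{1+c\sqrt{-d}}{2}$ (where $1-4\cdot 2^{n}=-c^{2}d$), exactly as in the proof of Theorem~\ref{thm1}, and then to argue that the order $m$ of the ideal class $[\mathfrak{a}]$ is divisible by the full $n$. For the odd part $n_{0}$ this is already handled by Proposition~\ref{propm} applied with $a=1$. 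For the $2$-power part I would instead invoke the standard ``genus-free'' descent for a single prime above $2$: if $[\mathfrak{a}]$ had order $m$ with $n/m$ even, then $\alpha$ would be (up to the unit $\pm1$, which is all there is since $d\neq 1,3$) an $(n/m)$-th power, in particular a square, forcing $1\cdot v^{2}+1=4\cdot 2^{n/2}$, i.e. $v^{2}+1=2^{n/2+2}$; reading this modulo $8$ shows $n/2+2\le 2$, hence $n\le 0$, a contradiction unless $n/2$ is itself $1$, which does not occur for even $n\ge 2$. One cleans this up by Proposition~\ref{propbs} (the case $d=?$, $a=1$): the equation $x^{2}+1=4p^{y}$ with $p=2$ has no solutions beyond small ones that are ruled out, pinning $m=n$.

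The main obstacle I anticipate is the bookkeeping for the genuinely exceptional value $n=4$, which Cohn's theorem explicitly excludes: here $2^{n+2}-1=63=9\cdot 7$, so $d=7$ and $\mathbb{Q}(\sqrt{-7})$ has class number $1$, which is visibly not divisible by $4$; so the statement's exclusion of $n=4$ must simply be carried along as a genuine exception arising from one of the sporadic families in Theorem~\ref{BST} (indeed $(d,a^{2},p)=(7,1,2)$ gives the solutions recorded in Proposition~\ref{propbs}, namely $(x,y)=(1,1),(3,4)$, and the $y=4$ solution is exactly what breaks the argument at $n=4$). I would therefore structure the proof so that the appeal to Proposition~\ref{propbs} is made with $(d,a,p)=(7,1,2)$ in mind, note that the two solutions $(x,y)=(1,1)$ and $(x,y)=(3,4)$ collide precisely when $n=4$, and conclude that for every integer $n\ge 2$ with $n\ne 4$ one has $m=n$ and hence $n\mid h(-d)$. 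A secondary, more routine obstacle is verifying that $d\ne 1$ and $d\ne 3$ for the relevant $n$ (so that the unit group is just $\{\pm1\}$ and the ``$t$ odd $\Rightarrow$ perfect power'' step goes through); this follows because $2^{n+2}-1\equiv 3\pmod 4$ is never $1$, and $2^{n+2}-1\equiv 0\pmod 3$ would force $3\mid d$ only if $3\parallel 2^{n+2}-1$, which one checks does not lead to $d=3$ via the same congruence analysis used in the proof of Theorem~\ref{thm1}.
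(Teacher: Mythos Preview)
Your odd-$n$ argument is exactly what the paper does: it writes ``We get Theorem~\ref{thmC} for odd $n$ from Theorem~\ref{thm1} by putting $a=1$'', and that is the entire derivation offered. (A small correction: conditions (i)--(ii) are not vacuous for $a=1$; rather, Theorem~\ref{thm1} simply does not impose them when $a=1$.) Note that the paper does \emph{not} claim to recover Cohn's theorem for even $n$ from Theorem~\ref{thm1}; your even-$n$ discussion goes beyond what the paper attempts.

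That extension, however, contains a concrete error. If $\alpha=\pm\delta^{2}$ with $\delta=\tfrac{u+v\sqrt{-d}}{2}$ and $N(\delta)=2^{n/2}$, then comparing real parts and norms gives
\[
u^{2}-dv^{2}=\pm 2,\qquad u^{2}+dv^{2}=2^{\,n/2+2},
\]
so $u^{2}=2^{\,n/2+1}\pm 1$, not ``$v^{2}+1=2^{\,n/2+2}$'' as you wrote. For the $+$ sign, $(u-1)(u+1)=2^{\,n/2+1}$ forces $u=3$ and $n=4$, which is precisely the excluded case (not ``$n\le 0$''); for the $-$ sign, $u^{2}\equiv 7\pmod 8$ for $n\ge 4$ (and $u^2=3$ for $n=2$), which is impossible. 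So the idea is salvageable, but the displayed equation and the modulo-$8$ conclusion are wrong as stated. You should also make explicit that the proof of Proposition~\ref{propm} goes through for any \emph{odd} prime $\ell\mid n$ regardless of the parity of $n$, since as written it inherits the standing hypothesis ``$n$ odd'' from Theorem~\ref{thm1}. Finally, your check that $d\neq 3$ for even $n$ is only a sketch: it amounts to showing that $2^{n+2}-1=3c^{2}$ has no solutions with $n\ge 2$, which is a genuine (if small) Diophantine claim, not just a congruence.
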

We get Theorem \ref{thmGR} (resp. Theorem \ref{thmC}) for prime $p$ (resp. odd $n$) from Theorem \ref{thm1} by putting $a=1$.


We give some numerical examples in Table \ref{T1} to illustrate Theorem \ref{thm1}. We use MAGMA  to compute numerical the class number of $K(a, p, n)$ for $a,p, n\leq 101$. However, we list them in Table \ref{T1} for $a,p\leq 15$ and $n\leq 9$, which means that discriminants do not exceed $13^9$. It is noted that this list does not exhaust all the imaginary quadratic fields $K(a,p,n)$ of discriminant not exceeding $13^9$.  In table \ref{T1}, we use $*$ in the column for class number, $h(d) $ to indicate the failure of condition (ii) of Theorem \ref{thm1} or exceptional case, whereas  $**$ indicates that conditions (i) and (ii) fail to hold. It is also clear from these numerical examples that none of the conditions (i) and (ii) of Theorem \ref{thm1} are necessary in our results. 
\begin{center}
\small
\begin{longtable}{cccccc|cccccc}
\caption{Numerical examples of Theorem \ref{thm1}.} \label{T1} \\

\hline \multicolumn{1}{c}{$n$} & \multicolumn{1}{c}{$a$} & \multicolumn{1}{c}{$p$}& \multicolumn{1}{c}{$a^2-4p^n$} & \multicolumn{1}{c}{$d$}& \multicolumn{1}{c|}{$h(d)$} & \multicolumn{1}{c}{$n$} & \multicolumn{1}{c}{$a$}& \multicolumn{1}{c}{$p$} & \multicolumn{1}{c}{$a^2-4p^n$}&\multicolumn{1}{c}{$d$} & \multicolumn{1}{c}{$h(d)$}\\ \hline 
\endfirsthead

\multicolumn{12}{c}%
{{\bfseries \tablename\ \thetable{} -- continued from previous page}} \\
\hline \multicolumn{1}{c}{$n$} & \multicolumn{1}{c}{$a$} & \multicolumn{1}{c}{$p$}& \multicolumn{1}{c}{$a^2-4p^n$} & \multicolumn{1}{c}{$d$}& \multicolumn{1}{c|}{$h(-d)$} & \multicolumn{1}{c}{$n$} & \multicolumn{1}{c}{$a$}& \multicolumn{1}{c}{$p$} & \multicolumn{1}{c}{$a^2-4p^n$}&\multicolumn{1}{c}{$d$} & \multicolumn{1}{c}{$h(d)$}\\ \hline 
\endhead
\hline \multicolumn{12}{r}{{Continued on next page}} \\ \hline
\endfoot
\hline 
\endlastfoot

3 & 1 & 2 & -31& 31&  3& 		3 & 1 & 3 & -107 & 107& 3 \\
3 & 1 & 5 & -499 & 499& 3& 3 & 1 & 7 & -1371 & 1371& 12\\
3 & 1 & 11 & -5323 & 5323& 15& 3 & 1 & 13&  -8787&  8787 & 12 \\
3 & 3 & 2 & -23 & 23& 3 & 3 & 3 & 5 & -491& 491&  9 \\
3 & 3&  7 & -1363 & 1363& 6 & 3 & 3 & 11 & -5315 & 5315& 18\\
3 & 3 & 13 & -8779 & 8779& 15 & 3 & 5 & 2 & -7 & 7& 1*\\
3& 5 &3 &-83 &83&3& 3 &5 &7 &-1347&1347& 6\\
3 & 5 & 11 & -5299 & 5299& 12 & 3 & 5 & 13 & -8763 & 8763& 24\\
3 &7 &3& -59& 59&3& 3 &7& 5& -451&451& 6\\
3 &7& 11& -5275&211& 3& 3 &7 &13& -8739&971& 15\\
3 &9& 5& -419&419& 9& 3 &9& 7& -1291&1291& 9\\
3 &9& 11& -5243& 107&3& 3 & 9 & 13&  -8707& 8707& 15\\
3 &11 & 5 &-379 & 379& 3& 3 &11 & 7 & -1251 & 139& 3\\
3 & 11 & 13 &-8667 & 107&3 &3 & 13 & 5 & -331 & 331& 3\\
3 & 13 & 7 & -1203 & 1203& 6 & 3 & 13 & 11 & -5155 & 5155& 12 \\
3 & 15 & 7 & -1147& 1147&  6&3 & 15 & 11 & -5099 & 5099& 39\\
3 &15 &13 &-8563& 8563&  9 & 5 &1 &2 &-127&127& 5\\
5 &1 &3 &-971&971& 15& 5 &1 &5& -12499&12499& 20\\
5 &1& 7& -67227&67227& 70&5 &1& 11& -644203&13147& 15\\
5 &1& 13& -1485171&571& 5& 5 &3& 2& -119&119& 10\\
5 &3& 5& -12491&12491& 55&5 &3& 7& -67219&67219& 65 \\
5 &3& 11& -644195&644195& 320&5 &3 &13& -1485163&1485163& 150\\
5 &5 &2 &-103&103& 5& 5 &5& 3 &-947&947& 5\\
5 &5& 7& -67203 &7467&20&5 &5 &11& -644179&644179& 190 \\
5 &5& 13& -1485147&1485147& 280&5 &7& 2& -79&79& 5\\
5 &7& 3& -923&923& 10&5 &7 &5& -12451&12451& 25\\
5 &7& 11& -644155&644155& 140&5 &7 &13& -1485123&1485123& 270\\
5 &9 &2 &-47& 47&5&5 &9 &5 &-12419&12419& 50\\
5 &9& 7& -67147&67147 &30&5 &9 &11 &-644123&644123& 175\\
5 &9 &13& -1485091&1485091& 250 & 5&11 & 2 & -7 & 7& 1* \\
5 &11& 3& -851&851& 10&5 &11& 5& -12379&12379& 25\\
5 &11& 7& -67107&67107& 80&5 & 11 & 13&  -1485051& 1485051& 350 \\
5 &13 &3 &-803&803& 10&5 &13& 5& -12331&12331& 20\\
5 &13 &7 &-67059& 7451&35&5 &13 &11& -644035&644035& 100\\
5 &15& 7& -67003&67003& 45&5 &15& 11& -643979&643979& 330\\
5 &15 &13 &-1484947&1484947& 185&7 &1 &2 &-511&511& 14\\
7 &1& 3& -8747&8747& 21&7 &1 &5 &-312499 &312499&126\\
7 &1& 7& -3294171&366019& 161&7 &1 &11& -77948683&77948683& 903\\
7 &1& 13& -250994067&250994067& 2352&7& 3& 2& -503&503& 21 \\
7& 3& 5& -312491&312491& 168&7& 3& 7& -3294163&3294163& 252 \\
7 &3& 11& -77948675& 3117947&504&7& 3& 13& -250994059 &250994059&2898 \\
7& 5& 2& -487&487& 7& 7 &5& 3& -8723&8723& 28 \\
7 &5& 7& -3294147&3294147& 448& 7& 5& 11& -77948659 &77948659&2884 \\
7& 5& 13& -250994043& 27888227&1820&7& 7& 2& -463&463& 7 \\
7& 7& 3& -8699&8699& 35&7& 7& 5 &-312451&312451& 175\\
7& 7& 11& -77948635 &77948635&1330 &7& 7& 13& -250994019&250994019& 5824\\
7& 9& 2& -431&431& 21& 7& 9& 5& -312419&312419& 238 \\
7& 9& 7& -3294091&3294091& 322 &7& 9& 11& -77948603&77948603& 2940 \\
7 &9& 13& -250993987&250993987& 2387&7& 11& 2& -391& 391&14\\
7 &11& 3& -8627&8627& 21&7& 11& 5& -312379&312379& 84\\
7 &11& 7& -3294051&3294051& 490 &7& 11& 13& -250993947&250993947& 2940 \\
7 &13& 2& -343&7& 1*&7 &13& 3& -8579&8579& 42 \\
7 &13& 5& -312331&312331& 119&7 &13& 7& -3294003&3294003& 308\\
7 &13& 11& -77948515&77948515& 1330&7& 15& 2& -287&287& 14 \\
7& 15& 7 &-3293947&3293947& 252&7& 15& 11& -77948459&77948459& 3990 \\
7& 15& 13& -250993843&250993843& 1540 &9 &1 &2 &-2047& 2047 &18 \\
9 &1 &3 &-78731 &78731 &108 &9 &1& 5& -7812499 &812499& 549\\
9 &1 &7 &-161414427 &161414427 &2160 &9 &1& 11& -9431790763& 9431790763& 16416  \\
9 &1 &13 &-42417997491& 42417997491& 51480&9 &3& 2& -2039& 2039& 45 \\
9& 3& 5& -7812491 &7812491& 765 &9 &3 &7 &-161414419 &161414419 &2997\\
9 &3 &11 &-9431790755 &9431790755 &41796 &9 &3 &13 &-42417997483 &42417997483& 24030 \\
9 &5& 2& -2023 &7 &1** &9 &5& 7 &-161414403& 161414403 &2304\\
9 &5 &11& -9431790739& 9431790739& 20664&9 &5& 13& -42417997467& 42417997467& 51390 \\
9 &7 &2 &-1999 &1999 &27&9 &7 &3 &-78683 &78683 &72 \\
9 &7& 5& -7812451& 7812451& 702 & 9 &7& 11& -9431790715& 9431790715& 18180\\ 
9 &7& 13& -42417997443& 4713110827& 10008&9 &9& 2& -1967& 1967& 36\\
9 &9& 5&-7812419& 7812419& 1800&9 &9 &7 &-161414347 &161414347 &2520 \\
9 &9 &11 &-9431790683 &9431790683& 28584&9 &9 &13 &-42417997411 &42417997411& 47556 \\
9 &11 &2 &-1927 &1927 &18 &9 &11 &3 &-78611& 78611 &90 \\
9 &11& 5& -7812379& 7812379& 405&9 &11 &7 &-161414307 &17934923 &1242 \\
9 &11& 13& -42417997371 &4713110819& 28215 & 9 &13 &2 &-1879 &1879 &27\\
9 &13& 3& -78563& 78563& 54&9 &13& 5 &-7812331 &7812331 &540 \\
9 &13& 7& -161414259& 161414259& 3060&9 &13& 11& -9431790595& 9431790595& 22176 \\
9 &15 &2 &-1823 &1823 &45&9 &15 &7 &-161414203 &161414203& 2052\\
9 &15& 11& -9431790539& 9431790539& 38610 &9 &15& 13& -42417997267 &42417997267 &27954 
 \end{longtable}
\end{center}

\section*{Acknowledgements}
The author is grateful to Professor K. Srinivas for his valuable suggestions and for generous financial support through his MATRICS Project. The author is also grateful to Professor K. Chakraborty for his valuable comments to improve the presentation of the paper. This work was supported by SERB MATRICS Project (No. MTR/2017/001006), Govt. of India.

\end{document}